\documentclass[12pt,reqno]{amsart}
\usepackage{graphicx, amssymb, color,pdfsync}

\numberwithin{equation}{section}

\topmargin0in
\textheight8.95in
\oddsidemargin0.1in
\evensidemargin0.1in
\textwidth6.5in
\advance\hoffset by -0.48 truecm

\newtheorem{theo}{Theorem}

\newtheorem{lemm}[theo]{Lemma}

\theoremstyle{remark}

\newcommand{\s}{\vspace{0.3cm}}

\begin{document}

\title{Fields of moduli of classical Humbert Curves}

\author[R.A. Hidalgo]{Rub\'en A. Hidalgo}
\address{Departamento de Matem\'atica, Universidad T\'ecnica Federico Santa Mar\'{\i}a, Casilla 110-V Valparaiso, Chile}
\email{ruben.hidalgo@usm.cl, sebastian.reyes@usm.cl}

\author[S. Reyes-Carocca]{Sebastian Reyes-Carocca}

\thanks{Partially supported by project Fondecyt 1070271 and UTFSM 12.09.02}

\subjclass[2000]{ 30F10, 14H37, 14H10, 14H45}
\keywords{Algebraic curves; Riemann surfaces; field of moduli; field of definition}

\begin{abstract}
The computation of the field of moduli of a closed Riemann surface seems to be a very difficult problem and even more difficult is to determine if the field of moduli is a field of definition. In this paper we consider the family of closed Riemann surfaces of genus five admitting a group of conformal automorphisms isomorphic to ${\mathbb Z}_{2}^{4}$. These surfaces are non-hyperelliptic ones and turn out to be the highest branched abelian covers of the orbifolds of genus zero and five cone points of order two. We compute the field of moduli of these surfaces and we prove that they are fields of definition. This result is in contrast with the case of the highest branched abelian covers of the orbifolds of genus zero and six cone points of order two as there are cases for which the above property fails.

\end{abstract}

\maketitle

\section{Introduction}
Let us denote by ${\rm Aut}({\mathbb C}/{\mathbb Q})$ the group of field automorphisms of the complex number field ${\mathbb C}$. If $P$ is some polynomial with complex coefficients and $\sigma \in {\rm Aut}({\mathbb C}/{\mathbb Q})$, then we denote by $P^{\sigma}$ the new polynomial obtained by applying $\sigma$ to the coefficients of $P$. If $C$ is some projective non-singular algebraic curve, say defined by polynomials $P_{1}$,..., $P_{r}$, and $\sigma \in {\rm Aut}({\mathbb C}/{\mathbb Q})$, then we denote by $C^{\sigma}$ the projective non-singular algebraic curve defined by the polynomials $P_{1}^{\sigma}$,..., $P_{r}^{\sigma}$. In this way, a natural action of ${\rm Aut}({\mathbb C}/{\mathbb Q})$ on the category of projective non-singular algebraic curves is obtained. As a complex projective non-singular algebraic curve defines a closed Riemann surface and,  as a consequence of the Riemann-Roch theorem, a closed Riemann surface can be described by a complex projective non-singular algebraic curve, there is a natural action of ${\rm Aut}({\mathbb C}/{\mathbb Q})$ at the level of closed Riemann surfaces. If $C_{1}$ and $C_{2}$ are projective non-singular algebraic curves, which are conformally equivalent as Riemann surfaces (we denote this by $C_{1} \cong C_{2}$) and $\sigma \in {\rm Aut}({\mathbb C}/{\mathbb Q})$, then it is known that $C_{1}^{\sigma} \cong C_{2}^{\sigma}$. In particular, we obtain a natural action of ${\rm Aut}({\mathbb C}/{\mathbb Q})$ on the moduli space of closed Riemann surfaces on each genera. This action is not well understood in genus at least $2$. 

Let $S$ be a closed Riemann surface. If $C$ is any projective non-singular algebraic curve which is, as a Riemann surface, conformally equivalent to $S$, then the stabilizer of the conformal class of $S$ is the group $K_{S}=\{\sigma \in {\rm Aut}({\mathbb C}/{\mathbb Q}): C^{\sigma} \cong C\}$. The fixed field of $K_{S}$, say  ${\mathcal M}(S)$, is known as the {\it field of moduli of $S$}. By the definition, the above does not depends on the specific choice of $C$ as longer it defines $S$. A {\it field of definition} of the closed Riemann surface $S$ is any subfield ${\mathbb F}<{\mathbb C}$ for which there is a finite collection of homogeneous polynomials, each of them defined over ${\mathbb F}$, so that they define a complex projective non-singular algebraic curve which is, view as a Riemann surface, conformally equivalent to $S$. 

It is well known that the field of moduli ${\mathcal M}(S)$ is contained in any field of definition of $S$ and Koizumi proved \cite{Koizumi1} that it coincides with the intersection of all fields of definitions of it. D\`ebes-Emsalem \cite{DE} (see also Hammer-Herrlich \cite{HH}) proved that there is a field of definition of $S$ which is an extension of finite degree of its field of moduli.

Both, the computation of the field of moduli and the determination of whether the field of moduli  is a field of definition are difficult problems for the case that $S$ has genus $g \geq 2$. 

Necessary conditions for $S$ to be definable over its field of moduli were provided by Weil \cite{Weil}. Weil's conditions hold trivially if ${\rm Aut}(S)$ is trivial. On the other extreme, if $S/{\rm Aut}(S)$ is an orbifold with signature of type $(0;a,b,c)$, $S$ is called quasiplatonic,  then Wolfart \cite{Wolfart} proved that $S$ can be defined over its field of moduli.

On 1972, explicit examples of hyperelliptic Riemann surfaces, which cannot be defined over their fields of moduli, were provided separately by Earle and Shimura  \cite{Earle, Shimura}.  Recently, on 2007, Huggins \cite{Huggins2} proved that a hyperelliptic Riemann surface $S$, with hyperelliptic involution $\iota$, for which ${\rm Aut}(S)/\langle \iota \rangle$ is neither trivial nor cyclic, can be defined over its field of moduli. Moreover, in the same paper, for each $n$ so that ${\rm Aut}(S)/\langle \iota \rangle \cong {\mathbb Z}_{n}$ there is constructed an example which cannot be definable over its field of moduli.

 In \cite{Earle} Earle stated the existence of non-hyperelliptic Riemann surfaces which cannot be defined over their fields of moduli, but no explicit example was provided. On 2009, explicit examples of non-hyperelliptic Riemann surfaces which cannot be definable over their fields of moduli were constructed by the first author  \cite{Hid}. These non-hyperelliptic examples turns out to be highest regular branched covers of an orbifold with signature $(0;2,2,2,2,2,2)$. These examples are also ${\mathbb Z}_{2}^{3}$ covers of the examples provided by Earle in \cite{Earle}.

A highest regular branched cover of an orbifold with signature $(0;2,\stackrel{n}{\ldots},2)$ is called a {\it generalized Humbert curve of type $n-1$} and it has genus  at least two if and only if $n \geq 5$; in which case it is known to be a non-hyperelliptic Riemann surface \cite{CGHR}. In this notation, the examples in \cite{Hid} are generalized Humbert curves of type $5$. A generalized Humbert curve of type $4$ is called a {\it classical Humbet curve}; these are closed Riemann surfaces of genus five and they were firstly studied by Humbert \cite{Humbert} and lately by Edge \cite{Edge} and Varley \cite{Varley} among others. 

We were wondering if examples of classical Humbert curves which cannot be definable over their fields of moduli may exist. In this paper we provide a negative answer, that is, we prove that each of these Riemann surfaces can be defined over their fields of moduli. Moreover, we compute the corresponding field of moduli.

The proof of the main result just follow the same techniques developed by D\`ebes-Emsalem in \cite{DE} (see also Section \ref{Sec:prelim}) adapted to our concrete family of curves.

\section{Classical Humbert curves and main results}
Let $S$ be a classical Humbert curve. By the definition,  $S$ admits  groups of conformal automorphisms  isomorphic to ${\mathbb Z}_{2}^{4}$;  called {\it classical Humbert groups} of $S$. 

If $H$ is a classical Humbert group of $S$, then Edge \cite{Edge} noted that each fixed point of a non-trivial element of $H$ is a Weierstrass point and, in particular, they are all the Weierstrass points of $S$. It follows from this that a classical Humbert curve admits at most one classical Humbert group and that $S$ is a non-hyperelliptic Riemann surface.  If  $H$ is  the classical Humbert group of $S$, then the pair $(S,H)$ is called a  \textit{classical Humbert  pair}.

The uniqueness of $H$ asserts that it is a normal subgroup of $Aut(S)$; so ${\rm Aut}(S)/H$ can be seen as a subgroup of $Aut_{orb}(S/H)$, where $Aut_{orb}(S/H)$ denotes the group of conformal automorphisms of the orbifold $S/H$. Now, as $S/H$ is an orbifold with signature $(0;2,2,2,2,2)$, it follows from the classical uniformization theorem that there is a Fuchsian group $\Gamma$ with presentation 
$$
\Gamma=\langle x_{1},...,x_{5}: x_{1}^{2}=\cdots=x_{5}^{2}=x_{1}x_{2}\cdots x_{5}=1\rangle,
$$
so that ${\mathbb H}^{2}/\Gamma=S/H$. It is not difficult to see that ${\mathbb H}^{2}/\Gamma' = S$ and that $H=\Gamma/\Gamma'$, where $\Gamma'$ is the derived subgroup of $\Gamma$. As $\Gamma'$ is unique in $\Gamma$, it follows that 
 ${\rm Aut}(S)/H=Aut_{orb}(S/H)$ and, in particular, that $Aut(S)$ can be obtained by the lifting to $S$ of $Aut_{orb}(S/H)$. As the orbifold $S/H$ is the Riemann sphere with exactly $5$ conical points, all of them of order $2$, we may identify $Aut_{orb}(S/H)$ with the finite subgroup of the group of M\"obius transformations
${\mathbb M}$ keeping invariant these $5$ conical points (see also \cite{CGHR}).

Now, as a classical Humbert curve $S$ is non-hyperelliptic, it may be holomorphically embedded as an smooth complex projective algebraic curve of degree $8$, say $C \subset {\mathbb P}_{\mathbb C}^{4}$, by using a basis of the $5$-dimensional complex space of holomorphic one-forms $H^{1,0}(S)$; the canonical curve \cite{Farkas-Kra}. In fact, Humbert \cite{Humbert} provides such a canonical curve as the intersection of $3$ (diagonal) quadrics in ${\mathbb P}_{\mathbb C}^{4}$. One of these three quadrics uses all the coordinates, that is, it is of the form $x_{1}^{2}+x_{2}^{2}+x_{3}^{2}+x_{4}^{2}+x_{5}^{2}=0$. 

In \cite{CGHR} a new holomorphic embedding of $S$, as an smooth complex projective algebraic curve of degree $8$ in ${\mathbb P}_{\mathbb C}^{4}$ was obtained as 
$$
C_{\lambda_{1},\lambda_{2}}=
\left\{\begin{array}{c}
x_{1}^{2}+x_{2}^{2}+x_{3}^{2}=0\\
\lambda_{1}x_{1}^{2}+x_{2}^{2}+x_{4}^{2}=0\\
\lambda_{2}x_{1}^{2}+x_{2}^{2}+x_{5}^{2}=0
\end{array}
\right.
$$
where, up to a M\"obius transformation, the $5$ conical points of $S/H$ are given by $\infty$, $0$, $1$, $\lambda_{1}$ and $\lambda_{2}$, so that $\lambda_{1} \neq \lambda_{2}$ and $\lambda_{1},\lambda_{2} \in {\mathbb C}-\{0,1\}$. In this representation the classical Humbert group
$H$ corresponds to the group generated by the involutions $a_{1}$,...,$a_{4}$, where $a_{j}$ is multiplication by $-1$ in the $j$-th coordinate and 
$\pi:C_{\lambda_{1},\lambda_{2}} \to \widehat{\mathbb C}$, defined by
$\pi([x_{1}:x_{2}:x_{3}:x_{4}:x_{5}])=-(x_{2}/x_{1})^{2}$, is a Galois cover with $H$ as cover group whose branch values are $\infty$, $0$, $1$, $\lambda_{1}$ and $\lambda_{2}$.

The uniqueness of $H$ asserts that $C_{\lambda_{1},\lambda_{2}} \cong C_{\mu_{1},\mu_{2}}$ if and only if there is some $T \in {\mathbb G}$ so that $T(\lambda_{1},\lambda_{2})=(\mu_{1},\mu_{2})$, where
$$
{\mathbb G}=\langle A(z,w)=(1/z, 1/w), B(z,w)=(w/(w-1), w/(w-z)\rangle \cong {\mathfrak S}_{5}.
$$

Given a classical Humbert curve $C_{\lambda_{1},\lambda_{2}}$ and 
given $\sigma \in {\rm Aut}({\mathbb C}/{\mathbb Q})$, we have the new classical Humbert curve $C_{\lambda_{1},\lambda_{2}}^{\sigma}=C_{\sigma(\lambda_{1}),\sigma(\lambda_{2})}$. In general, $C_{\lambda_{1},\lambda_{2}}$ and $C_{\sigma(\lambda_{1}),\sigma(\lambda_{2})}$ are not conformally equivalent Riemann surfaces. Let us denote by $K_{\lambda_{1},\lambda_{2}}<{\rm Aut}({\mathbb C}/{\mathbb Q})$ the subgroup consisting of all those $\sigma \in {\rm Aut}({\mathbb C}/{\mathbb Q})$ for which $C_{\lambda_{1},\lambda_{2}}$ and $C_{\sigma(\lambda_{1}),\sigma(\lambda_{2})}$ are conformally equivalent Riemann surfaces. In this way, the fixed field of 
$K_{\lambda_{1},\lambda_{2}}$ is the field of moduli ${\mathcal M}(C_{\lambda_{1},\lambda_{2}})$ of $C_{\lambda_{1},\lambda_{2}}$. 

The main result of this paper is the following.

\s
\noindent
\begin{theo}\label{theo1}
Let $\lambda_{1}, \lambda_{2} \in {\mathbb C}-\{0,1\}$ be so that $\lambda_{1} \neq \lambda_{2}$. Then
\begin{enumerate}
\item ${\mathcal M}(C_{\lambda_{1},\lambda_{2}})={\mathbb Q}(j_{1}(\lambda_{1},\lambda_{2}),j_{2}(\lambda_{1},\lambda_{2}))$, where 
$j_{1}$ and $j_{2}$ are provided in the proof.

\item ${\mathcal M}(C_{\lambda_{1},\lambda_{2}})$ is a field of definition for $C_{\lambda_{1},\lambda_{2}}$.
\end{enumerate}
\end{theo}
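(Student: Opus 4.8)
\emph{Proof strategy.}

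For Part (1), the plan is to read off the field of moduli from the invariant theory of the $\mathbb{G}\cong{\mathfrak S}_5$-action. By the isomorphism criterion recalled above, $C_{\lambda_1,\lambda_2}\cong C_{\mu_1,\mu_2}$ if and only if $(\mu_1,\mu_2)$ lies in the $\mathbb{G}$-orbit of $(\lambda_1,\lambda_2)$, and $\mathbb{G}$ acts on $(z,w)$-space by automorphisms defined over $\mathbb{Q}$. First I would exhibit two explicit rational functions $j_1,j_2\in\mathbb{Q}(z,w)$ which are $\mathbb{G}$-invariant and which \emph{separate the $\mathbb{G}$-orbits} inside the relevant locus $\mathcal{U}=\{(z,w):z\neq w,\ z,w\notin\{0,1\}\}$. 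A natural source for such functions is the collection of the five ``partial $j$-invariants'' $J_{1},\dots,J_{5}$ attached to the five $4$-point sub-configurations of $\{\infty,0,1,z,w\}$ (equivalently, absolute ${\rm SL}_2$-invariants of the binary quintic with roots at these five points): since $\mathbb{G}$ permutes the $J_i$, symmetric expressions in them are $\mathbb{G}$-invariant, and one then checks that two of them already suffice to separate orbits in $\mathcal{U}$. Granting this, since $j_i$ has rational coefficients one has $j_i(\sigma(\lambda_1),\sigma(\lambda_2))=\sigma\big(j_i(\lambda_1,\lambda_2)\big)$ for every $\sigma\in{\rm Aut}(\mathbb{C}/\mathbb{Q})$, so that $\sigma\in K_{\lambda_1,\lambda_2}$ iff $(\sigma(\lambda_1),\sigma(\lambda_2))\in\mathbb{G}\cdot(\lambda_1,\lambda_2)$ iff $\sigma$ fixes $L:=\mathbb{Q}(j_1(\lambda_1,\lambda_2),j_2(\lambda_1,\lambda_2))$; hence $K_{\lambda_1,\lambda_2}={\rm Aut}(\mathbb{C}/L)$ and therefore ${\mathcal M}(C_{\lambda_1,\lambda_2})$, being the fixed field of $K_{\lambda_1,\lambda_2}$, equals $L$, using that the fixed field of ${\rm Aut}(\mathbb{C}/L)$ is $L$.

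For Part (2), following the technique of D\`ebes-Emsalem recalled in Section~\ref{Sec:prelim}, I would descend the Galois covering $\pi\colon C_{\lambda_1,\lambda_2}\to C_{\lambda_1,\lambda_2}/H=\widehat{\mathbb C}$ rather than the curve directly. Write ${\mathcal M}={\mathcal M}(C_{\lambda_1,\lambda_2})$ and $B=\{\infty,0,1,\lambda_1,\lambda_2\}$. Since $H$ is the \emph{unique} classical Humbert group of $C_{\lambda_1,\lambda_2}$, it is carried along by any isomorphism and by the ${\rm Aut}(\mathbb{C}/\mathbb{Q})$-action; hence for $\sigma\in K_{\lambda_1,\lambda_2}$ we get $(\widehat{\mathbb C},B)^{\sigma}\cong(\widehat{\mathbb C},B)$, so the field of moduli of the marked genus-zero curve $(\widehat{\mathbb C},B)$ is contained in ${\mathcal M}$. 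Next, a marked form of the projective line is definable over its field of moduli up to replacing $\widehat{\mathbb C}$ by a Severi--Brauer conic $V$; but $V$ carries the odd-degree divisor corresponding to $B$, so ${\rm Pic}(V)=\mathbb{Z}$ and $V\cong\mathbb{P}^1_{\mathcal M}$. Thus $(\widehat{\mathbb C},B)$ has a model $(\mathbb{P}^1_{\mathcal M},B)$ over ${\mathcal M}$ with $B$ a reduced ${\mathcal M}$-rational divisor of degree $5$.

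Finally, $C_{\lambda_1,\lambda_2}$ is the highest branched abelian cover of $(\widehat{\mathbb C},B)$: as a connected cover of $\mathbb{P}^1_{\mathcal M}\setminus B$ it corresponds to the subgroup $\ker\big(\pi_1(\mathbb{P}^1\setminus B)\twoheadrightarrow {\mathbb Z}_2^{4}\big)$, which is characteristic in the geometric fundamental group and hence stable under ${\rm Gal}(\overline{\mathcal M}/{\mathcal M})$. Because $\mathbb{P}^1_{\mathcal M}\setminus B$ has an ${\mathcal M}$-rational point (it is the projective line minus finitely many points), the arithmetic fundamental exact sequence splits, and composing a splitting with the characteristic quotient produces a model over ${\mathcal M}$ of the étale cover $C_{\lambda_1,\lambda_2}\setminus\pi^{-1}(B)\to\mathbb{P}^1_{\mathcal M}\setminus B$; normalizing $\mathbb{P}^1_{\mathcal M}$ in its function field yields a smooth projective model of $C_{\lambda_1,\lambda_2}$ over ${\mathcal M}$, proving (2). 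By contrast, in the six-cone-point case $B$ has even degree, $V$ may be a conic without ${\mathcal M}$-rational points, the splitting above need not exist, and the residual obstruction in $H^2\big({\rm Gal}(\overline{\mathcal M}/{\mathcal M}),{\mathbb Z}_2^{5}\big)$ can be nonzero — this is the mechanism behind the examples of \cite{Hid}.

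The step I expect to be the main obstacle is in Part (1): producing two invariants $j_1,j_2$ that separate \emph{all} $\mathbb{G}$-orbits of $\mathcal{U}$, not merely the generic one — equivalently, that the coarse space $\mathcal{U}/\mathbb{G}$ embeds in ${\mathbb A}^2$ — which forces a careful choice and an honest (though finite, since $|\mathbb{G}|=120$) verification, the orbits of configurations with extra symmetry needing separate attention. In Part (2) the only step with genuine content is the vanishing of the descent obstruction in the last paragraph, and the point there is precisely that the \emph{odd} cardinality of $B$ makes the base conic split, hence provides the ${\mathcal M}$-rational point that rigidifies the cover.
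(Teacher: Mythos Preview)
Your strategy for Part~(1) is essentially the paper's: build $\mathbb{G}$-invariant rational functions $j_1,j_2\in\mathbb{Q}(z,w)$ that separate $\mathbb{G}$-orbits on $\Omega$, then use $\sigma(j_i(\lambda_1,\lambda_2))=j_i(\sigma(\lambda_1),\sigma(\lambda_2))$ together with the isomorphism criterion to match $K_{\lambda_1,\lambda_2}$ with ${\rm Aut}(\mathbb{C}/L)$. The only difference is cosmetic: the paper produces the invariants by Reynolds-averaging the seed $(z^4,w^2)$ over $\mathbb{G}$ and checks by direct computation that the resulting map $j$ has degree $120$, whereas you propose symmetric functions of the five partial $j$-invariants. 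Either construction needs the same honest verification you flag.

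For Part~(2) your route genuinely diverges from the paper, and there is a gap. You quotient by $H$ throughout and assert that the marked pair $(\widehat{\mathbb C},B)$ is ``definable over its field of moduli up to replacing $\widehat{\mathbb C}$ by a Severi--Brauer conic''. This is fine when ${\rm Aut}(\widehat{\mathbb C},B)$ is trivial, i.e.\ exactly when ${\rm Aut}(C_{\lambda_1,\lambda_2})=H$: then the $g_\sigma$'s are uniquely determined, Weil's cocycle holds, the descent gives a genus-zero curve $Z$ carrying the odd-degree rational divisor $R(B)$, and your conic-splits-then-characteristic-cover argument goes through (this is precisely the paper's \S5.3, phrased via the fundamental-group sequence rather than Lemma~\ref{lemahuggings} plus Theorem~\ref{canonico}). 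But when ${\rm Aut}(\widehat{\mathbb C},B)$ is nontrivial the isomorphisms $g_\sigma:(\widehat{\mathbb C},B)\to(\widehat{\mathbb C},B)^\sigma$ are only determined modulo ${\rm Aut}(\widehat{\mathbb C},B)$, the obstruction to choosing them compatibly lies in $H^2\big(K_{\lambda_1,\lambda_2},{\rm Aut}(\widehat{\mathbb C},B)\big)$, and there is no reason this reduces to a Brauer class. Your sentence asserting the conic model $(V,B')$ therefore begs exactly the question at the configurations with extra symmetry.

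The paper circumvents this by passing to $C_{\lambda_1,\lambda_2}/{\rm Aut}(C_{\lambda_1,\lambda_2})$ rather than $C_{\lambda_1,\lambda_2}/H$, so that the induced $g_\sigma$'s on the quotient are unique and the D\`ebes--Emsalem machinery applies directly. It then runs a case analysis on ${\rm Aut}(C_{\lambda_1,\lambda_2})/H$: if this quotient has order $\geq 3$ the curve is quasiplatonic and Wolfart's theorem applies; if it is $\mathbb{Z}_2$ one normalizes to $\lambda_2=\lambda_1^{-1}$, writes down the degree-two quotient map $Q$ to the canonical model (with cone points $\infty,0,1,\rho_\lambda$ of orders $2,2,2,4$), observes that the $g_\sigma$ land in the $\mathfrak{S}_3$ permuting $\{\infty,0,1\}$, and for each possible image $U\leq\mathfrak{S}_3$ exhibits by hand a point $r$ with $g_\sigma(r)=\sigma(r)$ for generators of $U$ (e.g.\ $r=-1,\tfrac12,2$ in the three $\mathbb{Z}_2$ subcases, and $r=\rho_\lambda(\rho_\lambda-2)/(1-2\rho_\lambda)$ for $U=\mathfrak{S}_3$). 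Your uniform argument is more conceptual and correctly isolates the odd-degree-of-$B$ mechanism, but to make it complete you would still have to dispose of the extra-automorphism strata, which in practice amounts to the same case split the paper carries out.
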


\s

\section{D\`ebes-Emsalem's method}\label{Sec:prelim}
 In \cite{DE,HH} it was proved that a non-singular projective algebraic curve can be defined over a  finite Galois extension of its field of moduli.
 This observation and Weil's theorem \label{Weil} provide sufficient conditions for the field of moduli to be a field of definition. We state Weil's theorem in a suitable form adequate for us.

\s
\noindent
\begin{theo}[Weil \cite{Weil}]\label{Prop:Weil}
Let $C$ be a non-singular projective algebraic curve defined over a finite Galois extension $L$ of its field of moduli ${\mathcal M}(C)$. 
If for every $\sigma \in {\rm Aut}(L/{\mathcal M}(C))$ there is a biholomorphism $f_{\sigma}:C \to C^{\sigma}$ defined over $L$ such that for all $\sigma, \tau \in {\rm Aut}(L/{\mathcal M}(C))$ the compatibility condition $f_{\tau\sigma}=f^{\tau}_{\sigma} \circ f_{\tau}$
holds, then there exists a non-singular projective algebraic curve $E$ defined over ${\mathcal M}(C)$ and there exists a biholomorphism $R:C \to E$, defined over $L$, such that $R^{\sigma} \circ f_{\sigma}=R$.
\end{theo}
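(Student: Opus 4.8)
The statement is the classical Weil cocycle (Galois descent) criterion, and the plan is to recognize the family $\{f_{\sigma}\}_{\sigma\in G}$, with $G:={\rm Aut}(L/{\mathcal M}(C))$, together with its compatibility relation $f_{\tau\sigma}=f_{\sigma}^{\tau}\circ f_{\tau}$, as a Galois descent datum for $C$ along the finite extension $L/{\mathcal M}(C)$, and then to trivialize it by Hilbert's Theorem~90. I would assume $g(C)\geq 2$ (which is the case in all situations of interest here); the remaining genera are handled by the general effectivity of Galois descent for quasi-projective schemes.

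First I would pass to a canonical embedding that every isomorphism of curves automatically respects. For a non-singular projective curve of genus $\geq 2$ the tricanonical linear system $|\omega_{C}^{\otimes 3}|$ gives a closed embedding $\iota:C\hookrightarrow{\mathbb P}(W^{\vee})$ with $W:=H^{0}(C,\omega_{C}^{\otimes 3})$, an $L$-vector space. Since this embedding is intrinsic, each biholomorphism $f_{\sigma}:C\to C^{\sigma}$ induces an $L$-linear isomorphism $F_{\sigma}:W\to W^{\sigma}$ (pullback along $f_{\sigma}^{-1}$) fitting in the evident commuting square with $\iota$ and $\iota^{\sigma}$, and the hypothesis $f_{\tau\sigma}=f_{\sigma}^{\tau}\circ f_{\tau}$ turns into the cocycle identity $F_{\tau\sigma}=F_{\sigma}^{\tau}\circ F_{\tau}$; thus $\{F_{\sigma}\}$ is a Galois descent datum on the $L$-vector space $W$. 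Because $f_{\sigma}$ carries $\iota(C)$ to $\iota^{\sigma}(C^{\sigma})$, the induced twisted $G$-action on ${\rm Sym}(W)$ preserves the homogeneous ideal $I(C)$ of $\iota(C)$.

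By Hilbert's Theorem~90 for ${\rm GL}(W)$ (Speiser's theorem), $H^{1}(G,{\rm GL}(W))=\{1\}$, so this descent datum trivializes: there are an ${\mathcal M}(C)$-vector space $W_{0}$ and an $L$-linear isomorphism $\rho:W_{0}\otimes L\to W$ carrying the canonical descent datum on $W_{0}\otimes L$ to $\{F_{\sigma}\}$. Transporting $I(C)$ through $\rho$, its $G$-invariance makes it descend to a homogeneous ideal $I_{0}\subseteq{\rm Sym}(W_{0})$ over ${\mathcal M}(C)$; put $E:={\rm Proj}({\rm Sym}(W_{0})/I_{0})\subseteq{\mathbb P}(W_{0}^{\vee})$. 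Then $E\otimes L\cong\iota(C)\cong C$, and as smoothness, properness and being one-dimensional descend along the finite faithfully flat morphism ${\rm Spec}\,L\to{\rm Spec}\,{\mathcal M}(C)$, $E$ is a non-singular projective curve over ${\mathcal M}(C)$. Let $R:C\to E$ be the $L$-biholomorphism obtained by transporting $\iota$ through $\rho$.

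It remains to check $R^{\sigma}\circ f_{\sigma}=R$: unwinding the constructions, $R$ and $R^{\sigma}\circ f_{\sigma}$ are the two $L$-morphisms $C\to E$ one obtains from the identification $W\cong W_{0}\otimes L$, and the commuting squares relating $\iota,\iota^{\sigma},f_{\sigma},F_{\sigma}$ together with the fact that $\rho$ intertwines $\{F_{\sigma}\}$ with the trivial datum (so that $E^{\sigma}=E$ canonically) force them to coincide. The heart of the proof --- and the only genuine obstacle --- is the vanishing $H^{1}(G,{\rm GL}(W))=\{1\}$, which converts the abstract cocycle into an honest ${\mathcal M}(C)$-form; routing through the \emph{canonical} embedding is exactly what keeps the cocycle ${\rm GL}$-valued rather than ${\rm PGL}$-valued, so that no Brauer-class obstruction can intervene.
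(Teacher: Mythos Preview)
The paper does not supply a proof of this theorem: it is quoted verbatim as Weil's classical descent criterion and attributed to \cite{Weil}, so there is no argument in the paper to compare against. Your proposal is therefore not competing with anything the authors wrote; it is simply a self-contained proof of the cited result.

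That said, your argument is essentially the standard modern proof and is correct. The key maneuver---passing to the tricanonical (rather than merely canonical) embedding so that the isomorphisms $f_{\sigma}$ lift \emph{canonically} to honest linear maps $F_{\sigma}\in{\rm GL}(W)$, not just elements of ${\rm PGL}$---is exactly the point that makes the cocycle split by Hilbert~90 with no Brauer obstruction. Two small comments: first, you silently use the natural identification $H^{0}(C^{\sigma},\omega_{C^{\sigma}}^{\otimes 3})\cong W^{\sigma}$, which is harmless but worth stating since the cocycle relation depends on it; second, your parenthetical about genera $0$ and $1$ is a bit glib---Weil's original argument handles all genera uniformly via the function field, and for $g\leq 1$ the descent of the curve is not automatic from ``effectivity of descent for quasi-projective schemes'' alone without saying a word about why the descent datum is effective (though it is). For the purposes of this paper, where every curve in sight has genus~$5$, your restriction to $g\geq 2$ is entirely adequate.
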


\s

Next result, due to D\`ebes-Emsalem \cite{DE}, is a consequence of Weil's theorem and provides a sufficient condition if a curve can be defined over its field of moduli. This method is given by the computation of certain points. First, we need to recall some definitions.
Let us consider a (branched) holomorphic covering between non-singular projective algebraic curves (so closed Riemann surfaces) $S$ and $R$, say 
$f:S \to R$, and let us assume that $R$ is defined over ${\mathcal M}(S)$. For each $\sigma \in {\rm Aut}({\mathbb C}/{\mathcal M}(S))$ we may consider the (branched) holomorphic covering $f^{\sigma}:S^{\sigma} \to R^{\sigma}=R$. We say that they are equivalent, noted as $\{f^{\sigma}:S^{\sigma} \to R\} \cong \{f:S \to R\}$, if there is a biholomorphism $\phi_{\sigma}:S \to S^{\sigma}$ so that $f^{\sigma} \circ \phi_{\sigma}=f$. The {\it field of moduli} of $f:S \to R$, denoted by ${\mathcal M}(f:S \to R)$,  is the fixed field of the subgroup 
$$G(f:S \to R)=\left\{\sigma \in {\rm Aut}({\mathbb C}/{\mathcal M}(S)): \{f^{\sigma}:S^{\sigma} \to R\} \cong \{f:S \to R\}\right\}<{\rm Aut}({\mathbb C}/{\mathbb Q}).$$

\s
\noindent
\begin{theo}[D\`ebes-Emsalem \cite{DE}]\label{canonico}
If $S$ is a non-singular projective algebraic curve of genus $g \geq 2$, then there exists a non-singular projective algebraic curve $B$, defined over ${\mathcal M}(S)$, and there exists 
a Galois cover $f:S \to B$, with ${\rm Aut}(S)$ as Deck group, so that ${\mathcal M}(f:S \to B)={\mathcal M}(S)$. Moreover, if $B$ contains at least one ${\mathcal M}(S)$-rational point outside the branch locus of $f$, then ${\mathcal M}(S)$ is also a field of definition of $S$. Such a curve $B$ is called a {\it canonical model} of $S/{\rm Aut}(S)$.
\end{theo}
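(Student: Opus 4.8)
The plan is to deduce everything from Weil's descent criterion (Theorem~\ref{Prop:Weil}), applied first to the quotient orbifold and then, under the extra hypothesis, to $S$ itself. Write $G:=\mathrm{Aut}(S)$, which is finite since $g\ge 2$; let $\pi\colon S\to B_{0}:=S/G$ be the quotient, a Galois branched cover with Deck group $G$; let $D_{0}\subset B_{0}$ be its branch locus; and set $\mathcal M:=\mathcal M(S)$. It is standard that every $\sigma\in\mathrm{Aut}(\mathbb C/\mathcal M)$ satisfies $S^{\sigma}\cong S$. Fix for each such $\sigma$ an isomorphism $\psi_{\sigma}\colon S\to S^{\sigma}$; it conjugates $G=\mathrm{Aut}(S)$ onto $\mathrm{Aut}(S^{\sigma})=G^{\sigma}$, hence descends to an isomorphism $\bar{\psi}_{\sigma}\colon B_{0}\to B_{0}^{\sigma}$ with $\bar{\psi}_{\sigma}\circ\pi=\pi^{\sigma}\circ\psi_{\sigma}$ and $\bar{\psi}_{\sigma}(D_{0})=D_{0}^{\sigma}$. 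The key point is that any two choices of $\psi_{\sigma}$ differ by precomposition with an element of $G$, which acts trivially on $B_{0}$; therefore $\bar{\psi}_{\sigma}$ does not depend on the choice, and we may write $f_{\sigma}:=\bar{\psi}_{\sigma}$.

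This independence makes $\{f_{\sigma}\}$ satisfy the compatibility condition of Theorem~\ref{Prop:Weil}: conjugating $\pi^{\sigma}\circ\psi_{\sigma}=f_{\sigma}\circ\pi$ by $\tau$ and composing with $\pi^{\tau}\circ\psi_{\tau}=f_{\tau}\circ\pi$ gives $\pi^{\tau\sigma}\circ(\psi_{\sigma}^{\tau}\circ\psi_{\tau})=(f_{\sigma}^{\tau}\circ f_{\tau})\circ\pi$, and since the map $B_{0}\to B_{0}^{\tau\sigma}$ induced by any isomorphism $S\to S^{\tau\sigma}$ is unique, it equals both $f_{\tau\sigma}$ and $f_{\sigma}^{\tau}\circ f_{\tau}$. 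As the $f_{\sigma}$ exist for all $\sigma\in\mathrm{Aut}(\mathbb C/\mathcal M)$, the field of moduli of $B_{0}$ is contained in $\mathcal M$; moreover $B_{0}$ is definable over a finite Galois extension $L$ of $\mathcal M$ (since $S$ is, by \cite{DE,HH}, and $\mathrm{Aut}(S)$ is defined over a finite extension), and by the usual spreading-out the family $\{f_{\sigma}\}$ factors through $\mathrm{Gal}(L/\mathcal M)$. Weil's descent criterion (Theorem~\ref{Prop:Weil} applied over $\mathcal M$; its proof gives descent over any field over which a model and a compatible family are defined) then yields a non-singular projective curve $B$ over $\mathcal M$ and an isomorphism $R\colon B_{0}\to B$ defined over $L$ with $R^{\sigma}\circ f_{\sigma}=R$; the divisor $D:=R(D_{0})$ then satisfies $D^{\sigma}=R^{\sigma}(f_{\sigma}(D_{0}))=R(D_{0})=D$, so it is defined over $\mathcal M$.

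Set $f:=R\circ\pi\colon S\to B$. This is a Galois cover with Deck group $G=\mathrm{Aut}(S)$ and branch locus $D$, and $B$ is defined over $\mathcal M$, so $\mathcal M(f\colon S\to B)$ is defined. For $\sigma\in\mathrm{Aut}(\mathbb C/\mathcal M)$ one computes $f^{\sigma}\circ\psi_{\sigma}=R^{\sigma}\circ\pi^{\sigma}\circ\psi_{\sigma}=R^{\sigma}\circ f_{\sigma}\circ\pi=R\circ\pi=f$, so $\{f^{\sigma}\colon S^{\sigma}\to B\}\cong\{f\colon S\to B\}$; hence $G(f\colon S\to B)=\mathrm{Aut}(\mathbb C/\mathcal M)$ and $\mathcal M(f\colon S\to B)=\mathcal M$. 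This produces the canonical model $B$ of $S/\mathrm{Aut}(S)$ and proves the first assertion of the theorem.

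Finally, assume $B$ has an $\mathcal M$-rational point $b_{0}$ outside $D$ and fix a point $p_{0}\in f^{-1}(b_{0})$. For $\sigma\in\mathrm{Aut}(\mathbb C/\mathcal M)$ the isomorphisms $\phi\colon S\to S^{\sigma}$ with $f^{\sigma}\circ\phi=f$ form a torsor under the Deck group $G$, and $G$ acts simply transitively on the unramified fibre $f^{-1}(b_{0})$; since $b_{0}$ is $\mathcal M$-rational, $p_{0}^{\sigma}$ lies in $(f^{\sigma})^{-1}(b_{0})$, so there is a \emph{unique} such $\phi_{\sigma}$ with $\phi_{\sigma}(p_{0})=p_{0}^{\sigma}$. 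Both $\phi_{\tau\sigma}$ and $\phi_{\sigma}^{\tau}\circ\phi_{\tau}$ satisfy $f^{\tau\sigma}\circ(\,\cdot\,)=f$ and carry $p_{0}$ to $p_{0}^{\tau\sigma}$, so by uniqueness $\phi_{\tau\sigma}=\phi_{\sigma}^{\tau}\circ\phi_{\tau}$; thus $\{\phi_{\sigma}\}$ satisfies the compatibility condition of Theorem~\ref{Prop:Weil} for $S$, and applying that theorem (after descending to a finite Galois $L/\mathcal M$ over which $f$, $b_{0}$, $p_{0}$ and the family are defined) gives a non-singular projective curve $E$ over $\mathcal M$ with $E\cong S$. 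Hence $\mathcal M=\mathcal M(S)$ is a field of definition of $S$. The two delicate points are exactly the rigidifications: the independence of $f_{\sigma}$ from the chosen isomorphism, which is what forces $B$ down to $\mathcal M$ with no obstruction, and the normalization $\phi_{\sigma}(p_{0})=p_{0}^{\sigma}$ made possible by the rational point — without it $\phi_{\sigma}$ is only well defined up to the non-trivial action of $G$ on $S$ and the cocycle identity can genuinely fail, in line with the existence (see \cite{Hid}) of curves not definable over their field of moduli.
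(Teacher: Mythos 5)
Your argument is correct and follows essentially the same route as the paper's recalled proof of D\`ebes--Emsalem: the uniqueness of the maps induced on $S/{\rm Aut}(S)$ yields a Weil cocycle producing the canonical model $B$ over ${\mathcal M}(S)$, and the ${\mathcal M}(S)$-rational point off the branch locus rigidifies the isomorphisms $S\to S^{\sigma}$ (via the simply transitive deck action on an unramified fibre) so that Weil's criterion applies to $S$ itself. You merely make explicit the cocycle verifications and the reduction to a finite Galois extension that the paper leaves implicit.
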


\s

We proceed to recall the general arguments for the proof of the above. Let $S$ be a non-singular projective algebraic curve (a closed Riemann surface) of genus at least two. We denote by ${\mathcal O}$ the Riemann orbifold $S/{\rm Aut}(S)$ and by ${\mathcal O}_{B}$ its subset of cone points. 
Let $P:S \to {\mathcal O}$ be a Galois cover with ${\rm Aut}(S)$ as its group of Deck transformations. For each $\sigma \in K_{S}$, where $K_{S}=\{\sigma \in {\rm Aut}({\mathbb C}/{\mathbb Q}): S^{\sigma} \cong S\}$, there is a biholomorphism
$f_{\sigma}:S \to S^{\sigma}$. It follows the existence of an automorphism of the orbifold ${\mathcal O}$, say $g_{\sigma}$, uniquely determined by $\sigma$, so that $P^{\sigma} \circ f_{\sigma}=g_{\sigma} \circ P$. The uniqueness of the automorphisms $g_{\sigma}$ ensures that the collection $\{g_{\sigma}: \sigma \in G_{S}\}$ satisfies the conditions on Weil's theorem for the underlying Riemann surface structure of ${\mathcal O}$, say $X$. It follows the existence of an irreducible projective algebraic curve $B$ defined over ${\mathcal M}(S)$ and a biholomorphism $Q:X \to B$ so that $Q=Q^{\sigma} \circ g_{\sigma}$. In this case, $f=Q \circ P:S \to B$ satisfies that ${\mathcal M}(f:S \to B)={\mathcal M}(S)$. Let us now assume that there is a point $q \in B$ which is not a branch values of $f$ and which is ${\mathcal M}(S)$-rational point. Let us fix some point $p \in S$ so that $f(p)=q$. One may check that it is posible, for each $\sigma \in K_{S}$, to choose $f_{\sigma}:S \to S^{\sigma}$ so that $f_{\sigma}(p)=\sigma(p)$. In fact, we first note that $f^{\sigma}(f_{\sigma}(p))=f(p)=q$ and that $f^{\sigma}(\sigma(p))=\sigma(f(p))=\sigma(q)=q$. In this way, there is some $h_{\sigma} \in {\rm Aut}(S^{\sigma})$ so that $h_{\sigma}(f_{\sigma}(p))=\sigma(p)$ and we may replace $f_{\sigma}$ by $h_{\sigma} \circ f_{\sigma}$ in order to have the required property. Next, one checks that such collection of new biholomorphism satisfies Weil's conditions to obtain that $S$ can be defined over its field of moduli. 

Now, in order to compute ${\mathcal M}(S)$-rational points we make use of the following fact.

\s
\noindent
\begin{lemm}\label{calculo}
In the above notation, $S$ can be defined over its field of moduli if there exists a point $r \in {\mathcal O}-{\mathcal O}_{B}$ so that
$$\sigma(r)=g_{\sigma}(r), \; \forall \sigma \in K_{S}.$$
\end{lemm}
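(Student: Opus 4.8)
The plan is to reduce the statement to the ``moreover'' clause of Theorem \ref{canonico}. By the discussion preceding the lemma we already have, attached to $S$, a canonical model $B$ of $S/{\rm Aut}(S)$ defined over ${\mathcal M}(S)$, a biholomorphism $Q:X\to B$ satisfying $Q=Q^{\sigma}\circ g_{\sigma}$ for every $\sigma\in K_{S}$, and the Galois cover $f=Q\circ P:S\to B$ (with ${\rm Aut}(S)$ as Deck group) for which ${\mathcal M}(f:S\to B)={\mathcal M}(S)$. By Theorem \ref{canonico} it therefore suffices to produce a single ${\mathcal M}(S)$-rational point of $B$ lying outside the branch locus of $f$. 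The hypothesis hands us the point $r\in{\mathcal O}-{\mathcal O}_{B}$, and the natural candidate is its image $q:=Q(r)\in B$.

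First I would check that $q$ avoids the branch locus of $f$. Since $P:S\to{\mathcal O}$ is the quotient by ${\rm Aut}(S)$, its branch values are exactly the cone points ${\mathcal O}_{B}$ of the orbifold; as $Q$ is a biholomorphism, the branch locus of $f=Q\circ P$ is precisely $Q({\mathcal O}_{B})$. Because $r\notin{\mathcal O}_{B}$ by hypothesis, $q=Q(r)\notin Q({\mathcal O}_{B})$, so $q$ lies off the branch locus of $f$, as required.

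The core of the argument is then to show that $q$ is ${\mathcal M}(S)$-rational. As $B$ is defined over ${\mathcal M}(S)$ and ${\mathcal M}(S)$ is by definition the fixed field of $K_{S}$, it is enough to verify that $\sigma(q)=q$ for every $\sigma\in K_{S}$. Fix such a $\sigma$. Applying $\sigma$ to the coefficients of the morphism $Q$ and to the coordinates of the point $r$ gives the functoriality identity $\sigma(Q(r))=Q^{\sigma}(\sigma(r))$; plugging in the hypothesis $\sigma(r)=g_{\sigma}(r)$ and then the relation $Q=Q^{\sigma}\circ g_{\sigma}$ yields
$$\sigma(q)=\sigma(Q(r))=Q^{\sigma}(\sigma(r))=Q^{\sigma}(g_{\sigma}(r))=(Q^{\sigma}\circ g_{\sigma})(r)=Q(r)=q.$$
Hence $q$ is an ${\mathcal M}(S)$-rational point of $B$ lying outside the branch locus of $f$, and Theorem \ref{canonico} gives that $S$ can be defined over its field of moduli.

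The only delicate point I anticipate is the precise meaning of ``$\sigma(r)$'' and the identity $\sigma(Q(r))=Q^{\sigma}(\sigma(r))$: they presuppose that the underlying curve $X$ of the orbifold ${\mathcal O}$ is presented with a fixed coordinate model, with respect to which both the point $r$ and the orbifold automorphisms $g_{\sigma}$ are described, so that the ${\rm Aut}({\mathbb C}/{\mathbb Q})$-action on $X$ and the relation $Q=Q^{\sigma}\circ g_{\sigma}$ make literal sense. In the application to classical Humbert curves this causes no difficulty, since there $X=\widehat{\mathbb C}={\mathbb P}^{1}_{\mathbb C}$ carries its standard ${\mathbb Q}$-model, $r$ is an honest complex number (or $\infty$), and $Q$ and the $g_{\sigma}$ are M\"obius transformations, so that the functoriality identity is just the trivial compatibility of a linear-fractional map with a field automorphism applied to its matrix entries, and the displayed chain of equalities is a one-line computation.
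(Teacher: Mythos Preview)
Your proof is correct and follows essentially the same route as the paper: set $q=Q(r)$, use the functoriality identity $\sigma(Q(r))=Q^{\sigma}(\sigma(r))$ together with $Q=Q^{\sigma}\circ g_{\sigma}$ to see that $q$ is ${\mathcal M}(S)$-rational, and then invoke Theorem~\ref{canonico}. The paper's version is terser (it phrases things as an equivalence rather than a one-direction verification and omits the explicit branch-locus check), but the underlying computation is identical.
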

\begin{proof}
The existence of a point $q \in B$ which is ${\mathcal M}(S)$-rational and not a branch value for the branched covering $f:S \to B$ is equivalent to find a point $r \in {\mathcal O}-{\mathcal O}_{B}$ (with $q=Q(r)$) with the property that
$$\sigma(Q(r))=Q(r), \; \forall \sigma \in K_{S}.$$

As, for $\sigma \in K_{S}$,  $\sigma(Q(r))=Q^{\sigma}(\sigma(r))=Q \circ g_{\sigma}^{-1}(\sigma(r))$,  the above is equivalent to find a point $r \in {\mathcal O}-{\mathcal O}_{B}$ with the property that
$$\sigma(r)=g_{\sigma}(r), \; \forall \sigma \in K_{S}.$$
\end{proof}

\s

The equality given in Lemma \ref{calculo}  permits to obtain the existence of a ${\mathcal M}(S)$-rational point in $B$ without knowing explicitly $B$; for it, we only need to know the transformations $g_{\sigma}$. Next, we provide the way how to compute such value of $r$ with only a finite number of computations. 

First, we states the following easy fact. Set $U=\{g_{\sigma}: \sigma \in K_{S}\}$.

\s
\noindent
\begin{lemm}\label{homo}
If $g_{\sigma}$ is defined over ${\mathcal M}(S)$, for every $\sigma \in K_{S}$, then $U$ is a group and 
$\Phi:K_{S} \to {\mathbb M}$, defined as $\Phi(\sigma)=g_{\sigma}^{-1}$, is a homomorphism of groups.
\end{lemm}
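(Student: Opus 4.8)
\noindent
\textit{Proof proposal.} The plan is to distill from the defining relation $P^{\sigma}\circ f_{\sigma}=g_{\sigma}\circ P$ a ``twisted cocycle'' identity for the family $\{g_{\sigma}\}_{\sigma\in K_{S}}$ and then to use the hypothesis to erase the twist. Two preliminary observations drive the argument. First, for each $\sigma\in K_{S}$ the orbifold automorphism $g_{\sigma}$ is \emph{uniquely} determined by the requirement $P^{\sigma}\circ f_{\sigma}=g_{\sigma}\circ P$: if $g\circ P=g'\circ P$ then $g^{-1}g'$ fixes $P(s)$ for every $s\in S$ and hence, $P$ being surjective, is the identity. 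Moreover $g_{\sigma}$ does not depend on the choice of the biholomorphism $f_{\sigma}\colon S\to S^{\sigma}$, since two such choices differ by a Deck transformation of $P^{\sigma}$, which acts trivially on ${\mathcal O}$. Second, since ${\mathcal M}(S)$ is by definition the fixed field of $K_{S}$, we have $K_{S}\subseteq{\rm Aut}({\mathbb C}/{\mathcal M}(S))$.

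The key step is the cocycle computation. Fix $\sigma,\tau\in K_{S}$ and apply $\tau$ to $P^{\sigma}\circ f_{\sigma}=g_{\sigma}\circ P$; using the functoriality of the $\tau$-twist and the identity $(\,\cdot\,)^{\tau\sigma}=\bigl((\,\cdot\,)^{\sigma}\bigr)^{\tau}$ --- the convention already fixed by Weil's compatibility condition $f_{\tau\sigma}=f_{\sigma}^{\tau}\circ f_{\tau}$ --- one gets $P^{\tau\sigma}\circ f_{\sigma}^{\tau}=g_{\sigma}^{\tau}\circ P^{\tau}$. Composing this on the right with $f_{\tau}\colon S\to S^{\tau}$ and substituting $P^{\tau}\circ f_{\tau}=g_{\tau}\circ P$ gives
$$P^{\tau\sigma}\circ\bigl(f_{\sigma}^{\tau}\circ f_{\tau}\bigr)=\bigl(g_{\sigma}^{\tau}\circ g_{\tau}\bigr)\circ P.$$
As $f_{\sigma}^{\tau}\circ f_{\tau}$ is a biholomorphism $S\to S^{\tau\sigma}$, the uniqueness recorded above, applied with $\tau\sigma$ in place of $\sigma$, forces $g_{\tau\sigma}=g_{\sigma}^{\tau}\circ g_{\tau}$ --- this is the Weil cocycle identity for the family $\{g_{\sigma}\}$ that already underlies the discussion preceding the lemma. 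Now invoke the hypothesis together with the second observation: $g_{\sigma}$ is defined over ${\mathcal M}(S)$ and $\tau$ fixes ${\mathcal M}(S)$, so $g_{\sigma}^{\tau}=g_{\sigma}$, and the relation collapses to $g_{\tau\sigma}=g_{\sigma}\circ g_{\tau}$.

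The conclusions are then immediate. Taking inverses, $g_{\tau\sigma}^{-1}=g_{\tau}^{-1}\circ g_{\sigma}^{-1}$, that is $\Phi(\tau\sigma)=\Phi(\tau)\,\Phi(\sigma)$, so $\Phi\colon K_{S}\to{\mathbb M}$ is a homomorphism of groups; in particular $\Phi(K_{S})$ is a subgroup of ${\mathbb M}$. Since the subgroup $\Phi(K_{S})$ is closed under inversion and $\Phi(K_{S})=\{\Phi(\sigma):\sigma\in K_{S}\}$, we have $U=\{g_{\sigma}:\sigma\in K_{S}\}=\{\Phi(\sigma)^{-1}:\sigma\in K_{S}\}=\Phi(K_{S})$; hence $U$ is a group.

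I expect the only genuine work to be bookkeeping. One must set up the ${\rm Aut}({\mathbb C}/{\mathbb Q})$-action on the orbifold ${\mathcal O}$ and on the cover $P$ carefully enough that ``applying $\tau$'' to $P^{\sigma}\circ f_{\sigma}=g_{\sigma}\circ P$ is literally functorial --- in particular that $P^{\sigma}$ is the canonical quotient $S^{\sigma}\to S^{\sigma}/{\rm Aut}(S^{\sigma})$, and that the identification of ${\mathcal O}^{\sigma}$ with ${\mathcal O}$ implicit in that equation is compatible with a further twist --- and one must phrase the uniqueness of $g_{\tau\sigma}$ so that it applies to the composite biholomorphism $f_{\sigma}^{\tau}\circ f_{\tau}$, which need not be one of the originally chosen maps $f_{\bullet}$; this is precisely why one first records that $g_{\sigma}$ is independent of the choice of $f_{\sigma}$. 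Once these conventions are pinned down, the displayed computation is the whole proof, and the hypothesis that each $g_{\sigma}$ is defined over ${\mathcal M}(S)$ enters at exactly one point: to discard the twist $g_{\sigma}^{\tau}$.
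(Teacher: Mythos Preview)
Your proof is correct and follows essentially the same route as the paper's: both derive the cocycle identity $g_{\sigma\tau}=g_{\tau}^{\sigma}\circ g_{\sigma}$ from the defining relation and uniqueness of $g_{\sigma}$, then use the hypothesis that each $g_{\sigma}$ is defined over ${\mathcal M}(S)$ to collapse the twist and obtain $g_{\sigma\tau}=g_{\tau}\circ g_{\sigma}$. Your version is somewhat more explicit---you spell out why $g_{\sigma}$ is unique and independent of the choice of $f_{\sigma}$, and you actually deduce that $U=\Phi(K_{S})$ is a group---whereas the paper leaves these points implicit; but the substance is the same.
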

\begin{proof}
Let $\sigma, \tau \in K_{\lambda}$. It is not hard to see that the following equality holds
$$
P^{\sigma \tau} \circ f_{\tau}^{\sigma} \circ f_{\sigma} = g_{\tau}^{\sigma} \circ g_{\sigma} \circ P.
$$

As $f_{\tau}^{\sigma} \circ f_{\sigma}=f_{\sigma \tau} \circ h$, for a suitable $h \in {\rm Aut}(C_{\lambda})$, and $g_{\sigma \tau} \in {\mathbb M}$ is uniquely determined by $\sigma \tau$ and satisfies the equality $P^{\sigma \tau} \circ f_{\sigma \tau}=g_{\sigma \tau} \circ P$, 
we obtain that $g_{\sigma \tau}=g_{\tau}^{\sigma} \circ g_{\sigma}$.  Since, in our particular case,  $g_{\tau}^{\sigma}=g_{\tau}$, the above asserts that $g_{\sigma \tau}=g_{\tau} \circ g_{\sigma}$ and we are done. 
\end{proof}

\s
\noindent
\begin{lemm}\label{genera}
Let us assume that, for every $\sigma \in K_{S}$ we have that $g_{\sigma}$ is defined over ${\mathcal M}(S)$.
Let $\sigma_{1},...,\sigma_{n} \in K_{S}$ be so that $g_{\sigma_{1}}$,..., $g_{\sigma_{n}}$ generate $U=\Phi(K_{S})$. Assume that there is some $r \in {\mathbb C}-\{0,1,\lambda\}$ so that
$g_{\sigma_{j}}(r)=\sigma_{j}(r)$, for every $j=1,...,n$. Then $g_{\sigma}(r)=\sigma(r)$, for every $\sigma \in K_{S}$.
\end{lemm}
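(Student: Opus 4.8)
The plan is to introduce the set $H=\{\sigma\in K_{S}:g_{\sigma}(r)=\sigma(r)\}$, to show that it is a subgroup of $K_{S}$ containing $\sigma_{1},\dots,\sigma_{n}$, and then to bootstrap from a generating set of $U$ to all of $K_{S}$ using the homomorphism $\Phi$ of Lemma~\ref{homo}.

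First I would verify that $H$ is a subgroup. One has $\mathrm{id}\in H$, since $g_{\mathrm{id}}=\mathrm{id}$ (from $g_{\mathrm{id}}=g_{\mathrm{id}}\circ g_{\mathrm{id}}$ in $\mathbb{M}$). The point that makes closure work is that each $g_{\tau}$ is defined over $\mathcal{M}(S)$ and every $\sigma\in K_{S}$ fixes $\mathcal{M}(S)$ pointwise, so $\sigma$ commutes with $g_{\tau}$ as self-maps of $\widehat{\mathbb C}$, i.e. $g_{\tau}(\sigma(z))=\sigma(g_{\tau}(z))$ for all $z$. Combining this with $g_{\sigma\tau}=g_{\tau}\circ g_{\sigma}$ (the computation in the proof of Lemma~\ref{homo}, valid here because $g_{\tau}^{\sigma}=g_{\tau}$), for $\sigma,\tau\in H$ we get
$$g_{\sigma\tau}(r)=g_{\tau}(g_{\sigma}(r))=g_{\tau}(\sigma(r))=\sigma(g_{\tau}(r))=\sigma(\tau(r))=(\sigma\tau)(r),$$
so $\sigma\tau\in H$. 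From $g_{\sigma}\circ g_{\sigma^{-1}}=g_{\mathrm{id}}=\mathrm{id}$ one gets $g_{\sigma^{-1}}=g_{\sigma}^{-1}$, and applying the invertible map $g_{\sigma}$ to the desired identity $g_{\sigma^{-1}}(r)=\sigma^{-1}(r)$ turns its right side into $g_{\sigma}(\sigma^{-1}(r))=\sigma^{-1}(g_{\sigma}(r))=\sigma^{-1}(\sigma(r))=r$ and its left side into $r$, so $\sigma^{-1}\in H$. Hence $\langle\sigma_{1},\dots,\sigma_{n}\rangle\subseteq H$.

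Next I would pass to an arbitrary $\sigma\in K_{S}$. Since $\Phi$ is a homomorphism and $\Phi(\sigma_{1})=g_{\sigma_{1}}^{-1},\dots,\Phi(\sigma_{n})=g_{\sigma_{n}}^{-1}$ generate $U=\Phi(K_{S})$, write $\Phi(\sigma)$ as a word in the $\Phi(\sigma_{j})^{\pm1}$ and let $\rho$ be the same word in the $\sigma_{j}^{\pm1}$; then $\rho\in\langle\sigma_{1},\dots,\sigma_{n}\rangle\subseteq H$ and $\Phi(\rho)=\Phi(\sigma)$, i.e. $g_{\rho}=g_{\sigma}$. Therefore $g_{\sigma}(r)=g_{\rho}(r)=\rho(r)$, and it only remains to see that $\rho(r)=\sigma(r)$, equivalently that $\nu:=\rho^{-1}\sigma$ fixes $r$; note $\nu\in\ker\Phi$, i.e. $g_{\nu}=\mathrm{id}$. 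Granting $\nu(r)=r$ one gets $\sigma(r)=(\rho\nu)(r)=\rho(\nu(r))=\rho(r)=g_{\sigma}(r)$, which is the claim.

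The genuine content, and the step I expect to be the main obstacle, is exactly the last point: showing that every $\nu$ with $g_{\nu}=\mathrm{id}$ fixes $r$. The subgroup manipulations above are formal, but the conclusion is an identity for \emph{all} $\sigma\in K_{S}$, and a finitely generated subgroup such as $\langle\sigma_{1},\dots,\sigma_{n}\rangle$ is far from exhausting $K_{S}$, so this cannot be deduced from the listed hypotheses in isolation; one really needs to control $\ker\Phi$. In the setting of the paper it should be recovered from the way $r$ arises: for $\nu\in\ker\Phi$ the relation $Q=Q^{\sigma}\circ g_{\sigma}$ defining the canonical model gives $Q=Q^{\nu}\circ g_{\nu}=Q^{\nu}$, so $Q$ is defined over $\mathrm{Fix}(\ker\Phi)$, and hence so is the point $r=Q^{-1}(q)$ corresponding to the sought $\mathcal{M}(S)$-rational point $q\in B$; since $\mathcal{M}(S)\subseteq\mathrm{Fix}(\ker\Phi)$, this forces $\nu(r)=r$. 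I would make this observation explicit — or simply restrict attention to points $r$ of that form — in order to close the argument.
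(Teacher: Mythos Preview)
Your closure computation---the chain
\[
(\sigma\tau)(r)=\sigma(\tau(r))=\sigma(g_{\tau}(r))=g_{\tau}(\sigma(r))=g_{\tau}(g_{\sigma}(r))=g_{\sigma\tau}(r)
\]
---is exactly what the paper does; in fact the paper's proof consists of nothing but this computation for a pair $\sigma_{i},\sigma_{j}$ and then declares itself finished. So on the part that overlaps, you and the paper agree.

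Where you go further is in worrying about $\ker\Phi$, and you are right to do so: the paper's argument only shows that the set $H=\{\sigma\in K_{S}:g_{\sigma}(r)=\sigma(r)\}$ contains the subgroup $\langle\sigma_{1},\dots,\sigma_{n}\rangle$, and since the $\sigma_{j}$ are only assumed to generate $U=\Phi(K_{S})$, not $K_{S}$ itself, the leap to all of $K_{S}$ is unjustified as written. The lemma, taken literally, is not proved by the paper's two lines; your diagnosis of the missing step (every $\nu$ with $g_{\nu}=\mathrm{id}$ must fix $r$) is exactly the issue.

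Your proposed remedy via the Weil isomorphism is, however, a bit circular: you invoke the ${\mathcal M}(S)$-rational point $q\in B$ and set $r=Q^{-1}(q)$, but the whole purpose of the lemma in the paper is to \emph{produce} such a $q$. The honest fix is to strengthen the hypothesis to $r\in\mathrm{Fix}(\ker\Phi)$, and this is what the paper's applications silently arrange. In the concrete situation of Section~5.2 one always has $g_{\sigma}(\rho_{\lambda})=\rho_{\sigma(\lambda)}=\sigma(\rho_{\lambda})$, so $g_{\nu}=\mathrm{id}$ forces $\nu(\rho_{\lambda})=\rho_{\lambda}$; every candidate $r$ the paper writes down lies in ${\mathbb Q}(\rho_{\lambda})$ (indeed in ${\mathbb Q}$ for the cases $U$ trivial or of order $2$), hence is automatically fixed by $\ker\Phi$, and then your subgroup argument closes the gap.
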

\begin{proof}
We only need to prove that $\sigma_{i}(\sigma_{j}(r))=g_{\sigma_{i}\sigma_{j}}(r)$, for every $i,j \in \{1,...,n\}$. But,
$$\sigma_{i}(\sigma_{j}(r))=\sigma_{i}(g_{\sigma_{j}}(r))=g_{\sigma_{j}}^{\sigma_{i}}(\sigma_{i}(r))=g_{\sigma_{j}}(\sigma_{i}(r))=g_{\sigma_{j}}(g_{\sigma_{i}}(r))=g_{\sigma_{j}} \circ g_{\sigma_{i}}(r).$$

Again, as $\Phi$ is a homomorphism of groups, the equality $g_{\sigma_{j}} \circ g_{\sigma_{i}}=g_{\sigma_{i} \sigma_{j}}$ holds; so we are done.
\end{proof}

\s

In our case, as ${\mathcal O}$ will be of genus zero, the automorphisms $g_{\sigma}$ will be suitable M\"obius transformations. Moreover, it will turn that $U<{\mathcal K}$, where
${\mathfrak S}_{3} \cong {\mathcal K}=\langle T(z)=1/z, L(z)=1/(z-1)\rangle<{\mathbb M}$, where ${\mathbb M}$ denotes the group of M\"obius transformations and ${\mathfrak S}_{3}$ denotes the symmetric group of order $6$. 

We will also need the following result in Huggins \cite{Huggins2}.

\s
\noindent
\begin{lemm}[Huggings \cite{Huggins2}]\label{lemahuggings}
Let $K$ be a subfield of $\mathbb C$.
Let $C$ be a genus zero non-singular projective algebraic curve and assume $C$ has a divisor $D$ of odd degree and rational over $K$. Then $C$ has $K$-rational points.
\end{lemm}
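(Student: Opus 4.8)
The plan is to combine the Riemann--Roch theorem on $C$ with the fact that a canonical divisor is defined over the field of definition. Throughout we work over $K$, and we use that ``rational over $K$'' for a divisor means it is invariant under $\mathrm{Gal}(\overline{K}/K)$ (equivalently, that it is a divisor on the $K$-scheme $C$). Since $C$ is a smooth projective curve of genus zero over $K$, a canonical divisor $\kappa_{C}$ is defined over $K$ and satisfies $\deg(\kappa_{C})=2g-2=-2$; moreover, for any divisor $E$ on $C$ with $\deg(E)\geq -1$ the Riemann--Roch theorem yields $\ell(E)=\deg(E)+1$, the correction term $\ell(\kappa_{C}-E)$ vanishing because $\kappa_{C}-E$ then has negative degree.

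The first step is to normalize the degree to $1$. Write $d=\deg(D)$, which is odd, so that $n=(d-1)/2$ is an integer, and put
$$D'=D+n\,\kappa_{C}.$$
Being a sum of divisors rational over $K$, the divisor $D'$ is rational over $K$, and $\deg(D')=d-2n=1$.

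The second step produces an effective divisor of degree $1$ rational over $K$. Since $\deg(D')=1\geq -1$, Riemann--Roch gives $\ell(D')=2$; because $D'$ is rational over $K$, the Riemann--Roch space $L(D')=\{f\in K(C)^{*}:\mathrm{div}(f)+D'\geq 0\}\cup\{0\}$ is a $K$-vector space of dimension $2$, its dimension being computed over $\overline{K}$ by flat base change. Choosing any nonzero $f\in L(D')$, the divisor $\mathcal{E}:=\mathrm{div}(f)+D'$ is effective, rational over $K$, and of degree $1$.

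The last step is the observation that an effective divisor of degree $1$ rational over $K$ is a $K$-rational point of $C$: over $\overline{K}$ such a divisor equals $[P]$ for a unique $P\in C(\overline{K})$, and its invariance under $\mathrm{Gal}(\overline{K}/K)$ forces $\sigma(P)=P$ for every $\sigma$, hence $P\in C(K)$. Applying this to $\mathcal{E}$ exhibits the desired $K$-rational point of $C$. There is essentially no genuine obstacle in this argument; the only point deserving some care is the descent statement used in the second step, namely that the complete linear system $|D'|$ (equivalently, the space $L(D')$) is defined over $K$ whenever $D'$ is, which is standard.
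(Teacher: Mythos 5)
Your argument is correct, and the only caveat is that there is nothing in the paper to compare it with: Lemma \ref{lemahuggings} is quoted from Huggins \cite{Huggins2} and the paper gives no proof of it, so you have supplied the missing (standard) proof rather than an alternative to one in the text. Your route is the classical one and essentially the one underlying Huggins' lemma: since $K\subset{\mathbb C}$ has characteristic zero, a canonical divisor rational over $K$ exists (take $\mathrm{div}(df)$ for a nonconstant $f\in K(C)$), it has degree $-2$, so adding $\tfrac{d-1}{2}\kappa_{C}$ to $D$ produces a $K$-rational divisor $D'$ of degree $1$; Riemann--Roch in genus zero gives $\ell(D')=2$, the dimension of $L(D')$ over $K$ agreeing with that over $\overline{K}$ by flat base change, and any nonzero $f\in L(D')$ yields an effective $K$-rational divisor of degree $1$, i.e.\ a closed point of degree $1$, which is automatically a $K$-rational point. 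All three steps are sound as written, and your remark that the only delicate point is the descent of $L(D')$ is the right one; in particular the argument needs no appeal to the conic model or to Brauer-group considerations, which is the other common way this fact is proved.
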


\section{Proof of part (1) of Theorem \ref{theo1}: Computation of ${\mathcal M}(C_{\lambda_{1},\lambda_{2}})$}

Set $\Omega=\{(z,w) \in ({\mathbb C}-\{0,1\})^{2}: z \neq w\}$. The group ${\mathbb G}$ acts a s a group of analytic automorphisms of $\Omega$. The space $\Omega$ is known as the Torelli space of the $5$-punctures Riemann sphere and $\Omega/{\mathbb G}$ turns out to be the moduli space of the $5$-punctured sphere, which is the same as the moduli space of classical Humbert curves as a consequence of the uniqueness of the classical Humbert group.  Next, we provide an explicit degree $5!$ map 
$$j:\Omega \to {\mathbb C}^{2}: (z,w) \mapsto (j_{1}(z,w), j_{2}(z,w))$$
so that $j(T(z,w))=j(z,w)$, for every $T \in {\mathbb G}$ and every $(z,w) \in \Omega$, and so that $j_{1}$ and $j_{2}$ are rational maps defined over ${\mathbb Z}$. That is, $j:\Omega \to j(\Omega)$ is a branched regular cover with ${\mathbb G}$ as its deck group; in particular, that $j(\Omega)$ is the moduli space of classical Humbert curves. For it, we consider the polynomial map $P(z,w)=(z^4,w^2)$ and the average rational map
$$j(z,w)=\sum_{T \in {\mathbb G}} P(T(z,w)) \in {\mathbb Q}(z,w).$$

By direct computation (done, for instance, with {\sc mathematica}) one may check that $j$ has degree $5!$. As $P$ and the transformations $T \in {\mathbb G}$ are defined over ${\mathbb Z}$, one can see that  $j$ is  as desired. The explicit forms of $j_{1}(z,w)$ and $j_{2}(z,w)$ are given as
$$j_{1}(z,w)=\frac{P_{1}(z,w)}{R_{1}(z,w)}$$
and
$$j_{2}(z,w)=\frac{P_{2}(z,w)}{R_{2}(z,w)},$$
where
$$R_{1}(z,w)=(z-1)^{4} z^{4}(z - w)^{4}(w-1)^{4} w^{4}$$
$$R_{2}(z,w)=(z-1)^{2} z^{2}(z - w)^{2}(w-1)^{2} w^{2}$$

\s
\noindent
$P_{1}(z,w)=
4 (2 z^8-8 z^9+12 z^{10}-8 z^{11}+6 z^{12}-8 z^{13}+12 z^{14}-8 z^{15}+
2 z^{16}-8 z^7 w+ 20 z^8 w-40 z^{10} w+16 z^{11}
w+16 z^{12} w-40 z^{13} w+
20 z^{15} w-8 z^{16} w+12 z^6 w^2-86 z^8 w^2+104 z^9 w^2+92 z^{10} w^2-112 z^{11} w^2+92 z^{12} w^2+104 z^{13} w^2-86 z^{14} w^2+12 z^{16} w^2-
8 z^5 w^3-40 z^6 w^3+104 z^7 w^3+84 z^8 w^3-416 z^9 w^3+56 z^{10} w^3+56 z^{11} w^3-416 z^{12} w^3+84 z^{13} w^3+104 z^{14} w^3-40 z^{15} w^3-
8 z^{16} w^3+6 z^4 w^4+16 z^5 w^4+92 z^6 w^4-416 z^7 w^4+439 z^8 w^4+448 z^9 w^4-176 z^{10} w^4+448 z^{11} w^4+439 z^{12} w^4-416 z^{13} w^4+
92 z^{14} w^4+16 z^{15} w^4+6 z^{16} w^4-8 z^3 w^5+16 z^4 w^5-112 z^5 w^5+56 z^6 w^5+448 z^7 w^5-1124 z^8 w^5-88 z^9 w^5-88 z^{10} w^5-
1124 z^{11} w^5+448 z^{12} w^5+56 z^{13} w^5-112 z^{14} w^5+16 z^{15} w^5-8 z^{16} w^5+12 z^2 w^6-40 z^3 w^6+92 z^4 w^6+56 z^5 w^6-176 z^6 w^6-
88 z^7 w^6+1522 z^8 w^6-720 z^9 w^6+1522 z^{10} w^6-88 z^{11} w^6-176 z^{12} w^6+56 z^{13} w^6+92 z^{14} w^6-40 z^{15} w^6+12 z^{16} w^6-
8 z w^7+104 z^3 w^7-416 z^4 w^7+448 z^5 w^7-88 z^6 w^7-720 z^7 w^7-388 z^8 w^7-388 z^9 w^7-720 z^{10} w^7-88 z^{11} w^7+448 z^{12} w^7-
416 z^{13} w^7+104 z^{14} w^7-8 z^{16} w^7+2 w^8+20 z w^8-86 z^2 w^8+84 z^3 w^8+439 z^4 w^8-1124 z^5 w^8+1522 z^6 w^8-388 z^7 w^8+
1182 z^8 w^8-388 z^9 w^8+1522 z^{10} w^8-1124 z^{11} w^8+439 z^{12} w^8+84 z^{13} w^8-86 z^{14} w^8+20 z^{15} w^8+2 z^{16} w^8-8 w^9+104 z^2 w^9-
416 z^3 w^9+448 z^4 w^9-88 z^5 w^9-720 z^6 w^9-388 z^7 w^9-388 z^8 w^9-720 z^9 w^9-88 z^{10} w^9+448 z^{11} w^9-416 z^{12} w^9+
104 z^{13} w^9-8 z^{15} w^9+12 w^{10}-40 z w^{10}+92 z^2 w^{10}+56 z^3 w^{10}-176 z^4 w^{10}-88 z^5 w^{10}+1522 z^6 w^{10}-720 z^7 w^{10}+1522 z^8
w^{10}-
88 z^9 w^{10}-176 z^{10} w^{10}+56 z^{11} w^{10}+92 z^{12} w^{10}-40 z^{13} w^{10}+12 z^{14} w^{10}-8 w^{11}+16 z w^{11}-112 z^2 w^{11}+56 z^3 w^{11}+448
z^4 w^{11}-
1124 z^5 w^{11}-88 z^6 w^{11}-88 z^7 w^{11}-1124 z^8 w^{11}+448 z^9 w^{11}+56 z^{10} w^{11}-112 z^{11} w^{11}+16 z^{12} w^{11}-8 z^{13} w^{11}+6
w^{12}+
16 z w^{12}+92 z^2 w^{12}-416 z^3 w^{12}+439 z^4 w^{12}+448 z^5 w^{12}-176 z^6 w^{12}+448 z^7 w^{12}+439 z^8 w^{12}-416 z^9 w^{12}+92 z^{10} w^{12}+
16 z^{11} w^{12}+6 z^{12} w^{12}-8 w^{13}-40 z w^{13}+104 z^2 w^{13}+84 z^3 w^{13}-416 z^4 w^{13}+56 z^5 w^{13}+56 z^6 w^{13}-416 z^7 w^{13}+84 z^8
w^{13}+
104 z^9 w^{13}-40 z^{10} w^{13}-8 z^{11} w^{13}+12 w^{14}-86 z^2 w^{14}+104 z^3 w^{14}+92 z^4 w^{14}-112 z^5 w^{14}+92 z^6 w^{14}+104 z^7 w^{14}-
86 z^8 w^{14}+12 z^{10} w^{14}-8 w^{15}+20 z w^{15}-40 z^3 w^{15}+16 z^4 w^{15}+16 z^5 w^{15}-40 z^6 w^{15}+20 z^8 w^{15}-8 z^9 w^{15}+2 w^{16}-
8 z w^{16}+12 z^2 w^{16}-8 z^3 w^{16}+6 z^4 w^{16}-8 z^5 w^{16}+12 z^6 w^{16}-8 z^7 w^{16}+2 z^8 w^{16})
$

\s
\noindent
$P_{2}(z,w)=4(2 z^4-4 z^5+6 z^6-4 z^7+2 z^8-4 z^3 w+2 z^4 w-4 z^5 w-4 z^6 w+2 z^7 w-4 z^8 w+6 z^2 w^2-4 z^3 w^2+23 z^4 w^2-16 z^5 w^2+
23 z^6 w^2-4 z^7 w^2+6 z^8 w^2-4 z w^3-4 z^2 w^3-16 z^3 w^3-6 z^4 w^3-6 z^5 w^3-16 z^6 w^3-4 z^7 w^3-4 z^8 w^3+2 w^4+2 z w^4+
23 z^2 w^4-6 z^3 w^4+30 z^4 w^4-6 z^5 w^4+23 z^6 w^4+2 z^7 w^4+2 z^8 w^4-4 w^5-4 z w^5-16 z^2 w^5-6 z^3 w^5-6 z^4 w^5-
16 z^5 w^5-4 z^6 w^5-4 z^7 w^5+6 w^6-4 z w^6+23 z^2 w^6-16 z^3 w^6+23 z^4 w^6-4 z^5 w^6+6 z^6 w^6-4 w^7+2 z w^7-4 z^2 w^7-
4 z^3 w^7+2 z^4 w^7-4 z^5 w^7+2 w^8-4 z w^8+6 z^2 w^8-4 z^3 w^8+2 z^4 w^8)$.

\s

Now, we proceed to compute ${\mathcal M}(C_{\lambda_{1},\lambda_{2}})$.
Let $\sigma \in K_{\lambda_{1},\lambda_{2}}$. As $C_{\lambda_{1},\lambda_{2}} \cong C_{\sigma(\lambda_{1}),\sigma(\lambda_{2})}$, there exists $T \in {\mathbb G}$ so that $T(\lambda_{1},\lambda_{2})=(\sigma(\lambda_{1}),\sigma(\lambda_{2}))$. Now, by the above explicit form of $j$, we have that $\sigma(j(\lambda_{1},\lambda_{2}))=j(\sigma(\lambda_{1}),\sigma(\lambda_{2}))=j \circ T (\lambda_{1},\lambda_{2})=j(\lambda_{1},\lambda_{2})$, from which we obtain that $ {\mathbb Q}(j_{i}(\lambda_{1},\lambda_{2}))<{\mathcal M}(C_{\lambda_{1},\lambda_{2}})$. In this way, 
${\mathbb Q}(j_{1}(\lambda_{1},\lambda_{2}),j_{2}(\lambda_{1},\lambda_{2}))<{\mathcal M}(C_{\lambda_{1},\lambda_{2}})$.

If $\sigma \in {\rm Aut}({\mathbb C}/{\mathbb Q})$ satisfies that $\sigma(j(\lambda_{1},\lambda_{2}))=j(\lambda_{1},\lambda_{2})$, then we have that 
$j(\lambda_{1},\lambda_{2})=\sigma(j(\lambda_{1},\lambda_{2}))=j(\sigma(\lambda_{1}),\sigma(\lambda_{2}))$. It follows that there exists some $T \in {\mathbb G}$ so that $T(\lambda_{1},\lambda_{2})=(\sigma(\lambda_{1}),\sigma(\lambda_{2}))$. In particular, $C_{\lambda_{1},\lambda_{2}}$ and $C_{\sigma(\lambda_{1}),\sigma(\lambda_{2})}$ are conformally equivalent Riemann surfaces and $\sigma \in K_{\lambda_{1},\lambda_{2}}$. This provides the inclusion
${\mathcal M}(C_{\lambda_{1},\lambda_{2}}) < {\mathbb Q}(j_{1}(\lambda_{1},\lambda_{2}),j_{2}(\lambda_{1},\lambda_{2}))$.

\s

\section{Proof of part (2) of Theorem \ref{theo1}: ${\mathcal M}(C_{\lambda_{1},\lambda_{2}})$ is a field of definition}
Let us denote by ${\mathcal O}_{\lambda_{1},\lambda_{2}}$ the orbifold whose underlying Riemann surface is the Riemann sphere $\widehat{\mathbb C}$ and its cone points are given by $\infty$, $0$, $1$, $\lambda_{1}$ and $\lambda_{2}$, each one of them with cone order equal to $2$. In this way, $C_{\lambda_{1},\lambda_{2}}/H={\mathcal O}_{\lambda_{1},\lambda_{2}}$ and
$\pi:C_{\lambda_{1},\lambda_{2}} \to {\mathcal O}_{\lambda_{1},\lambda_{2}}$ is defined by
$\pi([x_{1}:x_{2}:x_{3}:x_{4}:x_{5}])=-(x_{2}/x_{1})^{2}$.

The group ${\rm Aut}(C_{\lambda_{1},\lambda_{2}})/H={\rm Aut}_{orb}({\mathcal O}_{\lambda_{1},\lambda_{2}})$, the group of conformal orbifold automorphisms of ${\mathcal O}_{\lambda_{1},\lambda_{2}}$, is the finite subgroup of ${\mathbb M}$, the group of M\"obius transformations, that keeps invariant the set $\{\infty,0,1,\lambda_{1},\lambda_{2}\}$. As the finite  subgroups of ${\mathbb M}$, different from the trivial group, are either (i) cyclic groups, (ii) dihedral groups, (iii) the alternating group ${\mathcal A}_{4}$, (iv) the alternating group ${\mathcal A}_{5}$ and (v) the symmetric group ${\mathfrak S}_{4}$, it is not difficult to see that the only possibilities for ${\rm Aut}_{orb}({\mathcal O}_{\lambda_{1},\lambda_{2}})$ are given by 
either the trivial group, or the cyclic groups ${\mathbb Z}_{2}$, ${\mathbb Z}_{3}$, ${\mathbb Z}_{4}$, ${\mathbb Z}_{5}$ or the dihedral groups  ${\mathbb D}_{3}$, ${\mathbb D}_{5}$.

\subsection{}
Let us first assume that ${\rm Aut}(C_{\lambda_{1},\lambda_{2}})/H$ is not trivial nor isomorphic to ${\mathbb Z}_{2}$.
Under that assumption, $C_{\lambda_{1},\lambda_{2}}/Aut(C_{\lambda_{1},\lambda_{2}})={\mathcal O}_{\lambda_{1},\lambda_{2}}/{\rm Aut}_{orb}({\mathcal O}_{\lambda_{1},\lambda_{2}})$ is an orbifold with triangular signature, that is, the underlying Riemann surface structure is $\widehat{\mathbb C}$ and it has exactly $3$ cone points. It follows that $C_{\lambda_{1},\lambda_{2}}$ is a quasi-platonic surface. As quasi-platonic surfaces can be defined over their field of moduli \cite{Wolfart}, we are done in this case.

\subsection{}
Let us now assume that ${\rm Aut}(C_{\lambda_{1},\lambda_{2}})/H \cong {\mathbb Z}_{2}$; equivalent that ${\rm Aut}_{orb}(S/H) \cong {\mathbb Z}_{2}$. In this way, the subgroup of ${\mathbb M}$ keeping invariant the set of cone points $\infty$, $0$, $1$, $\lambda_{1}$ and $\lambda_{2}$ is generated by an elliptic transformation $A \in {\mathbb M}$ of order two. We must have that $A(p_{1})=p_{1}$, 
$A(p_{2})=p_{3}$ and $A(p_{4})=p_{5}$, where
$\{p_{1},p_{2},p_{3},p_{4},p_{5}\}=\{\infty,0,1,\lambda_{1},\lambda_{2}\}$. Let $T \in {\mathbb M}$ so that $T(p_{1})=1$, $T(p_{2})=0$ and $T(p_{3})=\infty$. Then $T \circ A \circ T^{-1}(z)=1/z$. In particular, $T(p_{5})=1/T(p_{4})$.
So, up to the action of ${\mathbb G}$ at the set of parameters $(\lambda_{1},\lambda_{2})$, we may assume that $\lambda_{2}=\lambda_{1}^{-1}$ and that ${\rm Aut}_{orb}({\mathcal O}_{\lambda_{1},\lambda_{2}})=\langle A(z)=1/z \rangle$. In this case, a lifting of $A$ is the automorphism of $C_{\lambda_{1},\lambda_{2}}$ given as
$$\widehat{A}([x_{1}:x_{2}:x_{3}:x_{4}:x_{5}])=[x_{2}:x_{1}:x_{3}:\sqrt{\lambda_{1}}x_{5}:\sqrt{\lambda_{2}}x_{4}]$$
and ${\rm Aut}(C_{\lambda_{1},\lambda_{2}})=\langle H, \widehat{A}\rangle$.

In the rest, we set $\lambda=\lambda_{1}$, $K_{\lambda}:=K_{\lambda_{1},\lambda_{2}}$, $C_{\lambda}:=C_{\lambda_{1},\lambda_{2}}$ and ${\mathcal O}_{\lambda}:={\mathcal O}_{\lambda_{1},\lambda_{2}}$. Let us consider the map $Q:\widehat{\mathbb C} \to \widehat{\mathbb C}$ defined by 
$$
Q(z)=\frac{-\lambda}{(1+\lambda)^{2}} (z+1/z+2)+1.
$$

Then, $Q$ defines a degree two branched cover with $\langle A\rangle$ as deck group. In this way, it can be checked that 
$P=Q \circ \pi:C_{\lambda} \to \widehat{\mathbb C}$ defines a regular branched cover with ${\rm Aut}(C_{\lambda})$ as its deck group. Note that 
$$
Q(\infty)=Q(0)=\infty,\; Q(\lambda)=Q(\lambda^{-1})=0,\; Q(1)=\rho_{\lambda}=\left( \frac{\lambda-1}{\lambda+1} \right)^{2}, \; Q(-1)=1.
$$

In this way, the cone points of orbifold ${\mathcal O}_{\lambda}=C_{\lambda}/{\rm Aut}(C_{\lambda})$ are given by $\infty$, $0$, $1$ (the three of them with cone order equal to $2$) and $\rho_{\lambda}$ with cone order equal $4$.

As we are assuming that ${\rm Aut}_{orb}({\mathcal O}_{\lambda})$ is isomorphic to ${\mathbb Z}_{2}$, we must have that
$\rho_{\lambda} \notin \{-1,0,1/2,2,(1 \pm i \sqrt{3})/2\}$.

For each $\sigma \in K_{\lambda}$ there exists a conformal isomorphism
$f_{\sigma}:C_{\lambda} \to C_{\sigma(\lambda)}$. By the uniqueness of $H$, in both ${\rm Aut}(C_{\lambda})$ and ${\rm Aut}(C_{\sigma(\lambda)})$, it follows that there exists a (uniquely determined by $\sigma$) M\"obius transformation $g_{\sigma} \in {\mathbb M}$ so that $P^{\sigma} \circ f_{\sigma}= g_{\sigma} \circ P$. Note that $P^{\sigma}=Q^{\sigma} \circ \pi^{\sigma}=Q^{\sigma} \circ \pi$, where
$$
Q^{\sigma}(z)=\frac{-\sigma(\lambda)}{(1+\sigma(\lambda))^{2}} (z+1/z+2)+1.
$$

Also, as $g_{\sigma}$ should preserve the cone points and the cone orders, we have that
$$
g_{\sigma}\left(\{\infty,0,1\}\right)=\{\infty,0,1\}
$$
$$
g_{\sigma}(\rho_{\lambda})= \rho_{\sigma(\lambda)}.
$$

In particular,  $g_{\sigma} \in \langle T(z)=1/z, L(z)=1/(1-z)\rangle \cong {\mathfrak S}_{3}$. It follows from Lemma \ref{homo} that 
$\Phi:K_{\lambda} \to {\mathbb M}$, defined as $\Phi(\sigma)=g_{\sigma}^{-1}$, is a homomorphism of groups.

The uniqueness of $g_{\sigma}$ ensures that the collection $\{g_{\sigma}: \sigma \in K_{\lambda}\}$ satisfies the conditions of Weil's theorem \cite{Weil}. It follows the existence of a conformal isomorphism $R:\widehat{\mathbb C} \to Z$, where $Z$ is defined over ${\mathcal M}(C_{\lambda})$, and so that $R=R^{\sigma} \circ g_{\sigma}$, for every $\sigma \in K_{\lambda}$. 

In this way, $\widehat{P}=R \circ P:C_{\lambda} \to Z$ is a regular branched cover with ${\rm Aut}(C_{\lambda})$ as its deck group so that, for every $\sigma \in K_{\lambda}$ it holds that $\widehat{P}=\widehat{P}^{\sigma} \circ f_{\sigma}$.

By Lemma \ref{calculo}, in order to prove that ${\mathcal M}(C_{\lambda})$ is a field of definition for $C_{\lambda}$, we only need to prove the existence of a point $r \in \widehat{\mathbb C}-\{\infty,0,1,\rho_{\lambda}\}$ such that, for every $\sigma \in K_{\lambda}$ it holds the equality  $g_{\sigma}(r)=\sigma(r)$. Next, using Lemma \ref{genera}, we proceed to compute such a value of $r \in {\mathbb C}-\{0,1,\rho_{\lambda}\}$ for each of the different possibilities for $U$.
\begin{enumerate}
\item $U=\{I\}$. In this case, we may choose for $r$ any rational number different from $0$, $1$ and $\rho_{\lambda}$.

\item $U \cong {\mathbb Z}_{2}$. In this case, $U$ is generated by one of the following transformations: $T_{1}(z)=1/z$ or $T_{2}(z)=1-z$ or $T_{3}(z)=z/(z-1)$. If $U$ is generated by $T_{1}$, then we may take $r=-1$. If $U$ is generated by $T_{2}$, then we may take $r=1/2$. If the generator of $U$ is $T_{3}$, then we may take $r=2$.

\item $U \cong {\mathbb Z}_{3}$. In this case, $U$ is generated by the transformation $L_{1}(z)=1/(1-z)$. By Lemma \ref{genera}, we only need to find $r$ so that $\sigma(r)=1/(1-r)$ and $\sigma(\rho_{\lambda})=1/(1-\rho_{\lambda})$. If we set 
$$
r=\frac{a_{0}+a_{1}\rho_{\lambda}}{(a_{0}+a_{1})-a_{0}\rho_{\lambda}},
$$
where $a_{0},a_{1} \in {\mathbb Q}$, then $\sigma(r)=g_{\sigma}(r)$.

\item $U={\mathfrak S}_{3}$. In this case, by Lemma \ref{genera}, we only need to find $r$ so that $\sigma(r)=1/(1-r)$, $\tau(r)=1/r$, $\sigma(\rho_{\lambda})=1/(1-\rho_{\lambda})$ and $\tau(\rho_{\lambda})=1/\rho_{\lambda}$. In this case we may take 
$$
r=\frac{\rho_{\lambda}(\rho_{\lambda}-2)}{1-2\rho_{\lambda}}.
$$
\end{enumerate}

\s

\subsection{}
Let us now assume that ${\rm Aut}(C_{\lambda_{1},\lambda_{2}})=H$. We have that $C_{\lambda_{1},\lambda_{2}}/{\rm Aut}(C_{\lambda_{1},\lambda_{2}})$ is the Riemann sphere with the cone points given by $p_{1}=\infty$, $p_{2}=0$, $p_{3}=1$, $p_{4}=\lambda_{1}$ and $p_{5}=\lambda_{2}$, each of them with cone order equal to $2$. Let us consider the positive divisor of degree five $D=p_{1}+p_{2}+p_{3}+p_{4}+p_{5}$. For each $\sigma \in K_{\lambda_{1},\lambda_{2}}$ there is a conformal isomorphism $f_{\sigma}:C_{\lambda_{1},\lambda_{2}} \to C_{\sigma(\lambda_{1}),\sigma(\lambda_{2})}$. Now, as in the previous case, there is a M\"obius transformation $g_{\sigma}$ (uniquely determined by $\sigma$) so that $\pi \circ f_{\sigma}=g_{\sigma} \circ \pi$. The set of cone points of $C_{\sigma(\lambda_{1}),\sigma(\lambda_{2})}/{\rm Aut}(C_{\sigma(\lambda_{1}),\sigma(\lambda_{2})})$ is given by 
$\{\infty,0,1,\sigma(\lambda_{1}),\sigma(\lambda_{2})\}$. On the other hand, as $g_{\sigma}$ should send the cone points to cone points, it follows that $\{g_{\sigma}(p_{1}),\ldots,g_{\sigma}(p_{5})\}=\{\infty,0,1,\sigma(\lambda_{1}),\sigma(\lambda_{2})\}$. As the collection $\{g_{\sigma}\}_{\sigma \in K_{\lambda_{1},\lambda_{2}}}$ satisfies Weil's theorem \cite{Weil}, it follows the existence of a conformal isomorphism $R:\widehat{\mathbb C} \to Z$, where $Z$ is defined over ${\mathcal M}(C_{\lambda_{1},\lambda_{2}})$, and so that $R=R^{\sigma} \circ g_{\sigma}$, for every $\sigma \in K_{\lambda_{1},\lambda_{2}}$. The divisor $R(D)=r(p_{1})+r(p_{2})+r(p_{3})+r(p_{4})+r(p_{5})$ now satisfies that $R(D)^{\sigma}=R(D)$, for every $\sigma \in K_{\lambda_{1},\lambda_{2}}$. In this way, the genus zero curve $Z$ has a divisor of odd degree defined over ${\mathcal M}(C_{\lambda_{1},\lambda_{2}})$. It now follows from Lemma \ref{lemahuggings} that $Z$ has ${\mathcal M}(C_{\lambda_{1},\lambda_{2}})$-rational points. At this point, these points may be branched values for the branched cover $R \circ \pi:C_{\lambda_{1},\lambda_{2}} \to Z$. Now, as $Z \cong _{{\mathcal M}(C_{\lambda_{1},\lambda_{2}})}{\mathbb P}^{1}_{{\mathcal M}(C_{\lambda_{1},\lambda_{2}})}$, it follows that $Z$ has ${\mathcal M}(C_{\lambda_{1},\lambda_{2}})$-rational points off the branch locus. Now the result follows again from Theorem \ref{canonico}.


\end{document}